\newtheorem{theorem}{Theorem}[section]
\newtheorem{proposition}[theorem]{Proposition}
\newtheorem{lemma}[theorem]{Lemma}
\newtheorem{corollary}[theorem]{Corollary}
\newtheorem{definition}[theorem]{Definition}
\newtheorem{remark}[theorem]{Remark}
\newtheorem{example}[theorem]{Example}
\newcommand{\holo}{\mathcal{O}}
\newcommand{\C}{\mathbb{C}}
\newcommand{\Z}{\mathcal{Z}}
\newcommand{\J}{\mathcal{J}}
\newcommand{\F}{\mathcal{F}}
\newcommand{\dbar}{\bar{\partial}}
\newcommand{\Res}{\operatorname{Res}}
\newcommand{\E}{\mathcal{E}}
\newcommand{\End}{\operatorname{End}}
\newcommand{\Hom}{\operatorname{Hom}}
\title{A local duality principle for ideals of pure dimension}
\author{Johannes Lundqvist}
\date{}
\begin{document}

\maketitle

\begin{abstract}
We prove that a certain cohomological residue associated to an ideal of pure
dimension is annihilated exactly by the ideal. The cohomological residue is
quite explicit and generalizes the classical local Grothendieck residue and the
cohomological residue of Passare.
\end{abstract}

\section{Introduction}\label{intro}

Assume that $f^1,\ldots,f^n$ are germs of holomorphic functions at
$x\in\mathbb{C}^n$ and that they define a complete intersection, i.e., that their common
zero set consists of the single point $x$.
Let $B(z)$ be the Bochner-Martinelli kernel at $x$, i.e.,
\begin{equation*}
B(z)=\frac{\sum(-1)^{j-1}(\bar{z}_j-\bar{x}_j) d\bar{z}_1\wedge\ldots\wedge
\widehat{d\bar{z}_j}\wedge\ldots\wedge d\bar{z}_n}{(|z_1-x_1|^2+\ldots+|z_n-x_n|^2)^n},
\end{equation*}
and let $f$ be the mapping from $\C^n_x$ to $\C^n_x$ given by $(f^1, \ldots, f^n)$.
Then $u:=f^*B$ is defined and $\dbar$-closed in a punctured neighborhood of $x$
and it turns out that $\varphi u$ is $\dbar$-exact if and only if
$\varphi\in\J_x=\langle f^1,\ldots, f^n\rangle$, the ideal in the local ring $\holo_x$ generated by the germs $f_j$.
Another way of putting this is that the Grothendieck residue
\begin{equation*}
\Res_f(\psi):=\left( \frac{1}{2\pi i}\right)^n\int_{\partial D} u\wedge\psi,
\quad \psi\in\Omega_x^n,
\end{equation*}
where $D$ is a sufficiently small neighborhood (depending on $\psi$) of $x$, is annihilated by
$\varphi$, i.e., $\varphi\Res_f(\psi):=\Res_f(\varphi\psi)=0$ for all $\psi$, if
and only if $\varphi$ belongs to $\J_x$. This is called the local duality principle,
see \cite[page 655 and 659]{GH}.
Note that the integral only depends on the Dolbeault cohomology class, in a punctured neighborhood of $x$, defined by
$u$. This cohomology class can be defined in a more abstract way but the interesting point
here is that we have a concrete condition on $\varphi$ for membership in $\J_x$.

The local duality principle above extends to more general ideals.
Assume that $\J_x=\langle f^1,\ldots,f^p\rangle$ is a complete intersection and
let $\Z$ be the common zero set of $\J_x$, i.e., 
\[
\Z=\{z\in\mathcal{U};\; f^1(z)=\ldots=f^p(z)=0\},
\]
where $\mathcal{U}$ is a small neighborhood of $x$.
Then Passare found , see \cite[Theorem 6.3.1]{Passare}, that in
$\mathcal{U}\setminus\Z$ the form $f^*B$, where $B$ is the Bochner-Martinelli form on
$\C^p$, works just like when $p=n$.
That is, $\varphi\in\J_x$ if and only if
\begin{equation}\label{IntCond}
\int\varphi u \wedge\dbar\psi =0,
\end{equation}
for all smooth $(n,n-p)$-forms $\psi$ with compact support in a sufficiently
small neighborhood of $x$ and such that $\dbar\psi=0$ close to $\Z$. Note that when $n=p$
the condition \eqref{IntCond}, even though it looks a bit different, coincides
with the condition for the local duality principle by
Stokes' theorem.
As before the integral only depends on the Dolbeault cohomology class of $u$ and
$[\varphi u]_{\dbar}=0$ outside of $\Z$ if and only if $\varphi\in\J_x$, see \cite[Corollary
6.3.4]{Passare}.

It is well known, see \cite{Roos}, that if the ideal $\J_x\subset\holo_x$ has pure codimension $p$,
then $\varphi\in\holo_x$ annihilates the $\holo_x$-module
$\E xt^p_{\holo_x}(\holo_x/\J,\holo_x)$ if and only if $\varphi\in\J_x$.
Let $\mathcal{H}om_{\holo_x}(\holo_x/\J_x, \mathcal{CH}_{\Z})$ be the $\holo_x$-module of Coleff-Hererra currents in the sense of Bj\"ork, \cite{Bjork}, annihilated by $\J_x$, and let $\mathcal{H}om_{\holo_x}(\holo_x/\J_x, \mathcal{H}_{\dbar}^{0,p-1}(\C^n_x\setminus\Z))$ be the
$\holo_x$-module of Dolbeault cohomology classes of bidegree $(0,p)$, defined outside of
$\Z$, annihilated by $\J_x$.
There is a commutative diagram
\begin{equation}\label{tau}
\xymatrix{
\E xt_{\holo_x}^p(\holo_x/\J_x,\holo_x) \ar[r]^(.4){\tau} \ar[dr]_{\cong}^{\nu}
&\mathcal{H}om_{\holo_x}(\holo_x/\J_x, \mathcal{H}_{\dbar}^{0,p-1}(\C^n_x\setminus\Z)) \\
&\mathcal{H}om_{\holo_x}(\holo_x/\J_x, \mathcal{CH}_{\Z}), \ar[u]_{\gamma}
}
\end{equation}
where $\gamma$ is defined in the following way:
If $\mu\in\mathcal{H}om_{\holo_x}(\holo_x/\J_x, \mathcal{CH}_{\Z})$,
then we can assume that there exists a small neighborhood of $x$ where $\mu$ is defined. The image $\gamma(\mu)$ is then defined to be the class of $\eta$, where $\eta$ is any solution to $\dbar \eta =\mu$ outside $\Z$ and in a possible smaller neighborhood of $x$.
The mapping $\nu$ is a well known isomorphism and $\tau$ is defined to make \eqref{tau} commute.
It turns out that $\gamma$ is injective, see Theorem 6.1 in \cite{Andersson2}. This means that if $\xi_j$ are generators of $\E xt_{\holo_x}^p(\holo_x /\J, \holo_x)$, then the images $v_l:=\tau(\xi_j)$ satisfy a local duality principle. That is, $\varphi v_{\ell}=0$ for all $\ell$ if and only if
$\varphi\in\J_x$.

In \cite{Andersson}, there is an explicit realization of the isomorphism $\nu$, based on the work in \cite{AnderssonWulcan}. This realization can be used to get explicit generators $v_{\ell}$ in $\mathcal{H}om_{\holo_x}(\holo_x/\J_x, \mathcal{H}_{\dbar}^{0,p-1}(\C^n_x\setminus\Z))$, given that $\xi_j$ are explicit.
In the special case where $\J_x$ is a complete intersection there is only one
generator for $\E xt_{\holo_x}^p(\holo_x/\J_x,\holo_x)$ and the image under $\nu$
is, up to a holomorphic factor, the classical Coleff-Herrera product, see
\cite{CH} for the definition,  and the image under $\tau$ is the
cohomological residue $f^*B$ of Passare. The fact that the Coleff-Herrera
product satisfies a local duality principle was proved independently by Passare
and Dickenstein-Sessa in \cite{Passare} and \cite{DickensteinSessa}
respectively.

The theory of Coleff-Hererra currents relies on Hironaka's desingularization theorem.
In this paper we take inspiration of the explicit realization of $\nu$ and give an algebraic 
construction of the cohomology classes $v_l$ and a rather elementary proof that they satisfies a duality principle. The construction and proof do not rely on the desingularization theorem of Hironaka or residue currents.
We also give an integral condition similar to \eqref{IntCond}.
The case where $\J_x$ is Cohen-Macaulay is already dealt with in \cite{L}.
In \cite{Lej1} there is a related cohomology class in the Cohen-Macaulay case.
However, that class arises in a more abstract way not giving an explicit
representative.
\\
\\
{ \bf Acknowledgement: } The author would like to thank Mats Andersson for
suggesting the problem and Elizabeth Wulcan for valuable comments that helped
improve this paper.


\section{A cohomological residue associated to ideals with pure dimension}

Let $\J_x$ be an ideal in the local ring $\holo_x$ of germs of holomorphic
functions at $x\in\mathbb{C}^n$ and assume that $\J_x$ has pure dimension, i.e.,
all associated primes have the same dimension.
Let $X$ be a small Stein neighborhood of $x$ and let $\J$ be the ideal sheaf over $X$ defined from $\J_x$.
Let $E_0$ be a trivial vector bundle of rank $1$ over $X$ and let 
\begin{equation}\label{resolution}
0\to\holo(E_N)\overset{f_N}{\to}\ldots\overset{f_3}{\to}\holo(E_2)\overset{f_2}{
\to}\holo(E_1)\overset{f_1}{\to}\holo(E_0)
\end{equation}
be a locally free resolution of the sheaf $\holo(E_0)/\J$, where $\J$ is
considered as a subsheaf of the sheaf of sections $\holo(E_0)=\holo$.

Set $E=\bigoplus E_k$.
We introduce a super structure on
$E$, i.e., we let $E=E^+\oplus E^-$ where $E^{+}=\bigoplus_k E_{2k}$ and
$E^{-}=\bigoplus_k E_{2k+1}$.
We say that elements in $E^{+}$ have even order and that elements in $E^{-}$
have odd order. The grading on $E$ induces a natural $\mathbb{Z}_2$-grading on
$\End E =\End E^{+}\oplus \End E^{-}$, where 
\begin{equation*}
\End E^{+}= \Hom(E^{+},E^{+})\oplus \Hom(E^{-},E^{-})
\end{equation*}
and
\begin{equation*}
\End E^{-}=\Hom(E^{+},E^{-})\oplus \Hom(E^{-},E^{+}). 
\end{equation*}
Let $\Z$ be the zero locus of $\J$ and let $\mathcal{M}$ denote the left $\E(\Lambda T^*(X\setminus\Z))$-module $\mathcal{E}( \Lambda T^*(X\setminus\Z) \otimes E)$.
We let the operator $f:=\sum f_j$ on $\E(E)$ and the operator $\dbar$ on $\mathcal{E}(\Lambda T^*(X\setminus\Z))$ act on $\mathcal{M}$ by the definitions
\[
f(\xi\eta):=(-1)^{\deg\xi\deg\eta}\xi f(\eta)
\]
and
\[
\dbar(\xi\eta):=(-1)^{\deg\xi\deg\eta}\dbar\xi \eta,
\]
for $\xi\in\E(\Lambda T^*(X\setminus\Z))$ and $\eta\in\E(E)$. It then follows that 
\begin{equation}\label{fdbar}
\dbar\circ f = -f\circ\dbar.
\end{equation}
Following the ideas in \cite{AnderssonWulcan} we introduce
the operator $\nabla_E:=f-\dbar$. The operator $\nabla_E$ acts on $\mathcal{M}$ but it also extends to a
map $\nabla_{\End E}$ on the endomorphisms of $\mathcal{M}$ by the
definition $\nabla_{\End E}v:=\nabla_E\circ v + v\circ \nabla_E$. This implies
that
\begin{equation}\label{NablaSol}
\nabla_E(v\varphi)=(\nabla_{\End E} v)\varphi - v\nabla_E(\varphi),
\end{equation}
for smooth $(p,q)$-sections $\varphi$ of  $E$. Note that $\eqref{fdbar}$ implies that $\nabla_E^2=0$, and hence also that
$\nabla_{\End E}^2=0$.

Given a Hermitian metric on $E$  let $\sigma$ be the minimal inverse to $f$ on $X\setminus\Z$,
i.e., $\sigma\varphi=\eta$, where $\eta$ is the solution to $f\eta=\varphi$ with
minimal norm, if $\varphi$ belongs to $\operatorname{Im} f$ and $\sigma\varphi=0$ if $\varphi$ belongs to
$(\operatorname{Im} f)^\perp$.
It is easy to check that $f\sigma+\sigma f = \operatorname{id}$ and that
$\sigma^2=0$ on $E$. 

The endomorphism 
\begin{equation}\label{u}
u:=\sigma+\sigma(\dbar\sigma)+\sigma(\dbar\sigma)^2+\ldots
\end{equation}
on $\mathcal{M}$ satisfies 
\begin{equation}\label{nablau}
\nabla_{\End E}u=\operatorname{id},
\end{equation} 
the identity on $\mathcal{M}$, see \cite{AnderssonWulcan}.
Assume from now on that $\varphi\in\holo(E_0)$. Then $\nabla_E \varphi = 0$ and as
a direct consequence of \eqref{NablaSol} and \eqref{nablau} we get
\begin{equation}\label{NablaEq}
\nabla_E(u\varphi)=\varphi.
\end{equation}

To define the cohomological residue we take inspiration from \cite{Andersson} and consider the dual complex of \eqref{resolution},
\begin{equation*}
0\longrightarrow\holo(E_0^*)\overset{f_1^*}{\longrightarrow}\holo(E_1^*)\overset
{f_2^*}{\longrightarrow}\ldots\overset{f_{p-1}^*}{\longrightarrow}
\holo(E^*_{p-1})\overset{f_p^*}{\longrightarrow}\holo(E^*_p)\overset{f_{p+1}^*}{
\longrightarrow}\ldots.
\end{equation*}
Let $u_p$ be the component of $u$ that take values in $\Hom(E,E_p)$.
If $\xi\in\holo(E_p^*)$ is such that
$f_{p+1}^*\xi=0$, then 
\begin{equation*}
\dbar(\xi u_p\varphi)=\xi\dbar( u_p\varphi )= \xi f_{p+1} (u_{p+1}\varphi) =
(f_{p+1}^* \xi) u_{p+1}\varphi = 0,
\end{equation*}
where we have used \eqref{NablaEq} in the second equality.
Also, if $\xi-\xi^{\prime}=f^*_p\eta$ we see that
\[
\xi u_p\varphi - \xi^{\prime} u_p\varphi = (f^*_p \eta) u_p\varphi = \eta f_p (
u_p\varphi)=\dbar(\eta u_{p-1}\varphi).
\]
Let  $\mathcal{H}^p(E^*_{\bullet})$ be the $p$-th cohomology group of the
complex $\holo(E_{\bullet}^*)$, i.e., locally a representation of $\E xt_{\holo_x}^p(\holo_x/\J_x,\holo_x)$, and let  $\F$ denote the sheaf where
$\F_y=H_{\dbar}^{0,p-1}(\C^n_y\setminus\Z)$ for $y\in X$.
To conclude, we then get a bilinear map
\begin{equation}\label{map}
\mathcal{H}^p(E^*_{\bullet})\times \holo(E_0) \longrightarrow \F,
\end{equation}
defined by $([\xi],\varphi)\mapsto[\xi u_p\varphi]_{\dbar}$.

\begin{lemma}\label{BevisLemma}
Let  $\varphi\in\holo(E_0)$ and assume that  $\xi\in\holo(E_p^*)$ and 
$\phi,\phi^{\prime}\in\mathcal{M}$ satisfies  $f_{p+1}^*\xi=0$ and $\nabla_E \phi=\nabla_E \phi^{\prime} = \varphi$
respectively. Then $[\xi \phi_p]_{\dbar} = [\xi \phi^{\prime}_p]_{\dbar}$, where $\phi_p$ is the components of $\phi$ taking values in $E_p$. In particular $[\xi \phi_p]_{\dbar}$=$[\xi u_p \varphi]_{\dbar}$, where $u$ is defined by $\eqref{u}$.
\end{lemma}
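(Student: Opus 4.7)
The natural move is to set $\psi := \phi - \phi'$, so that $\nabla_E \psi = 0$, and to exploit the homotopy formula \eqref{nablau}. Applying $\nabla_{\End E}u = \operatorname{id}$ to $\psi$ and unpacking it via \eqref{NablaSol},
\[
\psi \;=\; (\nabla_{\End E}u)\psi \;=\; \nabla_E(u\psi) + u(\nabla_E\psi) \;=\; \nabla_E(u\psi).
\]
Writing $\rho := u\psi$, the identity $\psi = \nabla_E\rho = (f - \dbar)\rho$, projected onto the component taking values in $E_p$, reads (up to a sign $\varepsilon \in \{\pm 1\}$ coming from the super structure)
\[
\psi_p \;=\; f_{p+1}(\rho_{p+1}) + \varepsilon\,\dbar\rho_p.
\]

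Now I pair both sides against $\xi$. The hypothesis $f_{p+1}^*\xi = 0$ kills the first term, since
\[
\xi\, f_{p+1}(\rho_{p+1}) \;=\; (f_{p+1}^*\xi)\,\rho_{p+1} \;=\; 0.
\]
For the second term, $\xi$ is holomorphic, so $\dbar\xi = 0$ and the Leibniz rule in the module $\mathcal{M}$ gives $\xi\,\dbar\rho_p = \pm\,\dbar(\xi\rho_p)$. Thus $\xi\psi_p$ is $\dbar$-exact on $X\setminus\mathcal{Z}$, which is exactly the statement $[\xi\phi_p]_{\dbar} = [\xi\phi'_p]_{\dbar}$. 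The ``in particular'' assertion is then immediate by taking $\phi' := u\varphi$, since $\nabla_E(u\varphi) = \varphi$ by \eqref{NablaEq}.

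Most of the substance of the argument is bought and paid for by the homotopy formula $\nabla_{\End E}u = \operatorname{id}$ quoted from \cite{AnderssonWulcan}; once that is in hand, the proof is a two-line unwinding. The one point where care is needed is the $\mathbb{Z}_2$-grading bookkeeping: one must assume (or reduce to the case) that $\phi$ and $\phi'$ have the same bidegree pattern as $u\varphi$, so that $\phi_p$ really is a $(0,p-1)$-form and $[\xi\phi_p]_{\dbar}$ genuinely represents a class in $\mathcal{F}$, and one must verify the signs I suppressed. No analytic input is required beyond the smoothness of $u$ on $X\setminus\mathcal{Z}$ built into its construction.
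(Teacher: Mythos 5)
Your proof is correct and is essentially identical to the paper's: both set $\psi=\phi-\phi'$, use \eqref{NablaSol} together with $\nabla_{\End E}u=\operatorname{id}$ to write $\phi-\phi'=\nabla_E(u\psi)$, project onto the $E_p$-component, and kill the $f_{p+1}$ term with $f_{p+1}^*\xi=0$ to exhibit $\xi\phi_p-\xi\phi'_p$ as $\dbar$-exact. Your extra remarks on sign bookkeeping and the degree pattern of $\phi,\phi'$ are sensible but do not change the argument.
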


\begin{proof}
Assume that $\nabla_E \phi = \nabla_E \phi^{\prime} =\varphi$.
Then $\nabla_E(\phi - \phi^{\prime})=0$ and hence 
\[
\nabla_E(u(\phi-\phi^{\prime}))= (\nabla_{\End E}u)(\phi-\phi^{\prime})-u\nabla_E(\phi-\phi^{\prime})=(\phi-\phi^{\prime})
\]
by \eqref{NablaSol}.
If we denote $u(\phi-\phi^{\prime})$ by $\psi$ we get that 
\[
f_{p+1}\psi_{p+1}-\dbar \psi_p = \phi_p - \phi^{\prime}_p,
\]
which implies that 
\[\xi \phi_p-\xi \phi^{\prime}_p =-\dbar\xi \psi_p.\]
\end{proof}

The following proposition says that the cohomology classes we get from $u$ essentially only depend on the ideal sheaf $\J$.

\begin{proposition}\label{well-defined}
Assume that $u$ is the endomorphism \eqref{u} associated to a resolution $\holo(E_{\bullet})$ of $\J$.
Then the following holds:
\begin{enumerate}[(i)]
 \item Let $\xi\in\holo(E_p^*)$ and assume that $f_{p+1}^*\xi=0$.
The cohomology class $[\xi u_p \varphi]_{\dbar}$ does not depend on the metric on $E$.
 \item\label{t} Assume that $\holo(F_{\bullet})$ is another resolution of the sheaf $\holo(E_0)/\J$ and let $v$ be the endomorphism associated with $\holo(F_{\bullet})$. Then, for appropriately chosen metrics,  there exists a
holomorphic isomorphism $g_0$, from $E_0$ to $F_0$, and a holomorphic isomorphism
$g_p$, from a subbundle of $E_p$ to a subbundle of $F_p$, such that
\begin{equation*}
v_p \varphi= g^{-1}_p w_p g_0 \varphi,
\end{equation*}
for $\varphi\in\holo(E_0)$.
\end{enumerate}
\end{proposition}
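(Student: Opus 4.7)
My plan for (i) is a direct application of Lemma \ref{BevisLemma}. Let $u$ and $\tilde u$ be the endomorphisms built from two different Hermitian metrics on $E$. By \eqref{NablaEq} both $u\varphi$ and $\tilde u\varphi$ satisfy $\nabla_E(u\varphi)=\nabla_E(\tilde u\varphi)=\varphi$, so setting $\phi=u\varphi$ and $\phi^{\prime}=\tilde u\varphi$ in Lemma \ref{BevisLemma} immediately yields $[\xi u_p\varphi]_{\dbar}=[\xi \tilde u_p\varphi]_{\dbar}$.

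For (ii) the strategy is to transport the computation between the two resolutions via a holomorphic chain map. By the comparison theorem for projective resolutions, lifting the identity on $\holo(E_0)/\J$, one obtains a holomorphic chain map $g_\bullet:\holo(E_\bullet)\to\holo(F_\bullet)$ with $f^F_k\circ g_k=g_{k-1}\circ f^E_k$. Since $E_0$ and $F_0$ are both trivial of rank one and $g_0$ covers the identity on the common quotient, $g_0$ is an isomorphism. I then extend $g$ to $\mathcal{M}$ by letting it act only on the $E$-factor; being holomorphic it commutes with $\dbar$, and being a chain map it intertwines $f$, so $g\circ\nabla_E=\nabla_F\circ g$.

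Applying $g$ to \eqref{NablaEq} yields $\nabla_F(g u\varphi)=g_0\varphi$, while by construction $\nabla_F(v\,g_0\varphi)=g_0\varphi$ as well. Lemma \ref{BevisLemma} on the $F$-side then gives $[\eta (g u\varphi)_p]_{\dbar}=[\eta\, v_p\,g_0\varphi]_{\dbar}$ for every cocycle $\eta\in\holo(F_p^*)$, which already establishes the cohomological content. To upgrade this to the pointwise formula asserted in the proposition, the remaining step is to choose the metrics on $E$ and $F$ compatibly with $g$: inductively in $k$, one arranges that $g_k$ restricts to a holomorphic isometry from $(\ker f^E_k)^{\perp}$ onto a subbundle of $F_k$. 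With such metrics the minimal inverses intertwine, $\sigma^F g=g\sigma^E$ on the relevant subbundles, the iteration \eqref{u} then respects $g$, and restricting to the $E_p$- and $F_p$-components identifies the isomorphism $g_p$ of subbundles and the desired equality.

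I expect the main obstacle to be precisely this metric-compatibility step. The comparison theorem and Lemma \ref{BevisLemma} take care of the algebra and of the cohomological ambiguity, but to realize $g$ as a genuine \emph{metric} map on the relevant orthogonal complements — rather than merely chain-homotopic to one — one has to build Hermitian decompositions on $E_\bullet$ and $F_\bullet$ that are simultaneously adapted to $g$, which requires a careful inductive construction level by level and is where the ``appropriately chosen metrics'' hypothesis does its work.
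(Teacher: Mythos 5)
Part (i) of your proposal is correct and is precisely the paper's argument: two choices of metric give $u\varphi$ and $\tilde u\varphi$ with the same image under $\nabla_E$, and Lemma \ref{BevisLemma} does the rest. For part (ii), however, there is a genuine gap, and it sits exactly where you flag the ``main obstacle.'' Your cohomological identity $[\eta\,(gu\varphi)_p]_{\dbar}=[\eta\,v_p\,g_0\varphi]_{\dbar}$, obtained by pushing \eqref{NablaEq} through a comparison-theorem chain map and applying Lemma \ref{BevisLemma} on the $F$-side, is correct (and is essentially what the proposition is used for later), but it is not the statement being proved. The pointwise formula with bundle isomorphisms cannot be reached from an arbitrary lift $g_\bullet$ of the identity: such a chain map is only determined up to homotopy and in positive degrees is in general neither injective nor surjective (e.g.\ when $E_\bullet$ contains a trivial direct summand absent from $F_\bullet$), so there is no candidate isomorphism $g_p$ of subbundles. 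Moreover $(\ker f^E_k)^{\perp}$ depends antiholomorphically on the metric and is only a bundle off $\Z$ where the ranks of $\ker f^E_k$ do not jump, so ``a holomorphic isometry from $(\ker f^E_k)^{\perp}$'' is not well posed; and the intertwining $\sigma^F g=g\sigma^E$ needed to push $g$ through the iteration \eqref{u} would require $g_k\bigl((\ker f^E_k)^{\perp}\bigr)\subseteq(\ker f^F_k)^{\perp}$ at every level, which you neither construct nor can construct for a general lift.

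The paper sidesteps all of this with the structure theorem for free resolutions over a local ring, \cite[Theorem 20.2]{Eisenbud}: each resolution is isomorphic \emph{as a complex} to a direct sum of a minimal resolution and a trivial complex, and minimal resolutions are isomorphic. This yields honest holomorphic isomorphisms of complexes $h^1$, $h^2$, $g$; pulling the metric back through an isomorphism makes the minimal inverses conjugate on the nose (so $u=(h^1)^{-1}u^{\oplus}h^1$, etc.), and a metric respecting the direct sum gives $u^{\oplus}=u^{\prime}\oplus u^{\prime\prime}$ with $u^{\oplus}\varphi=u^{\prime}\varphi$ because $E_0^{\prime\prime}=F_0^{\prime\prime}=0$. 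The asserted formula then follows by composing these conjugations. To repair your argument, either weaken the conclusion to the cohomological statement you actually proved (checking that this still suffices for the proof of Theorem \ref{Thm}), or replace the comparison-theorem chain map by the isomorphisms supplied by Eisenbud's theorem and transport the metrics through them.
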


\begin{proof}
If $u$ and $u^{\prime}$ satisfy
$\nabla_E(u)=\nabla_E(u^{\prime})=\varphi\in\holo(E_0)$ but are defined from
different metrics on $E$, then they still satisfy $\nabla_E(u-u^{\prime})=0$. Hence
they define the same cohomology class by Lemma \ref{BevisLemma}. That is, the cohomology class is independent of the metric.

Assume now that $\holo(E_{\bullet})$ and $\holo(F_{\bullet})$ are two different
resolutions of $\holo(E_0)/\J$. Each of these resolutions is isomorphic to a direct sum
of a minimal resolution and a trivial complex, and minimal resolutions are
isomorphic, see \cite[Theorem 20.2]{Eisenbud}. We therefore have isomorphisms
$h^1,h^2$ and $g$ according to the diagram 
\begin{equation*}
\xymatrix{
E_{\bullet} \ar[r]^(.4){h^1} & E_{\bullet}^{\prime}\ar[d]<-3ex>^g\oplus
E_{\bullet}^{\prime\prime} \\
F_{\bullet} \ar[r]^(.4){h^2} & F_{\bullet}^{\prime}\oplus
F_{\bullet}^{\prime\prime}.
}
\end{equation*}
By assumption $E_0$ and $F_0$ have rank $1$, so we see that $E_0^{\prime\prime}$ and
$F_0^{\prime\prime}$ are equal to $0$. 
If $u^{\oplus}$ is the endomorphism associated to $E_{\bullet}^{\prime}\oplus
E_{\bullet}^{\prime\prime}$ and we choose a metric on $E_{\bullet}$ that
respects the direct sum, we can write $u^{\oplus}=u^{\prime}\oplus
u^{\prime\prime}$,  where $u^{\prime}$ and $u^{\prime\prime}$ are the endomorphisms
associated to $E_{\bullet}^{\prime}$ and $E_{\bullet}^{\prime\prime}$
respectively. Note that
$u^{\oplus}\varphi = u^{\prime} \varphi$. 
Assume that $u$ and $v$ satisfy the assumption in \eqref{t}. Then
 $u \varphi = (h^1)^{-1} u^{\prime}h^1 \varphi$. Analogously we get $v \varphi
= (h^2)^{-1} v^{\prime}h^2 \varphi$ and therefore
\begin{align*}
u_p \varphi = &(h^1)^{-1} u_p^{\prime}h^1 \varphi = 
(h^1)^{-1} g^{-1} v_p^{\prime} g h^1 \varphi = 
(h^1)^{-1} g^{-1} h^2 v_p^{\prime}(h^2)^{-1} g h^1 \varphi = \\
&(h^1)^{-1} g^{-1} h^2 v_p ((h^1)^{-1} g^{-1} h^2)^{-1}\varphi.
\end{align*}
Let $g_0$ be the part of $((h^1)^{-1} g^{-1} h^2)^{-1}$ that acts on $E_0$ and $g_p$ the part that acts on $E_p$. Then  $u_p \varphi= g^{-1}_p v_p g_0 \varphi$ and we are done.
\end{proof}

Let $\{[\xi_{\ell}]\}_{\ell=1}^r$ be a set of generators for 
$\mathcal{H}^p(E^*_{\bullet})$.
We denote the cohomology class 
$[\xi_{\ell} u_p 1]_{\dbar}$ by $\omega_{\ell}$.
The cohomology classes $\omega_{\ell}$ are the cohomological residues we are interested in, i.e., the ones annihilated by $\J_x$, see Theorem \ref{Thm} below.

Note that if $\varphi\in\holo(E_0)$, then the simple calculation
\[
\nabla_E(\varphi u1)=(\nabla_E\varphi )u1 + \varphi \nabla_E (u1)=\varphi\nabla_E (u1) =
\varphi(\nabla_{\End E}u1-u\nabla_E1) = \varphi
\]
and Lemma \ref{BevisLemma} implies that
\begin{equation}\label{Tek}
\varphi\omega_{\ell}=[\xi_{\ell}u_p\varphi].
\end{equation}

\section{A local duality principle}

As before, we let $x$ be a point in $\C^n$ and consider a small Stein neighborhood, $X$, of $x$.
Let $\J_x\subset\holo_x$ be an ideal and let $\J$ be the ideal sheaf over $X$ defined by $\J_x$.

Let $D\subset X$ be a small neighborhood of $x$ and denote by $\mathcal{D}_{\Z}$ the ring of smooth
$(n,n-p)$-forms $\psi$ with compact support in $D$ that satisfy $\dbar\psi=0$
close to $\Z$.

\begin{definition}
The residue 
\begin{equation*}
\Res^{\ell}_{\J}:\mathcal{D}_{\Z} \to\mathbb{C}
\end{equation*}
is given by
\begin{equation}
\Res^{\ell}_{\J}(\psi)=\int \omega_{\ell}\wedge\dbar\psi.
\end{equation}
\end{definition}

We see that the residue $\Res_{\J}^{\ell}$ is well-defined, i.e., it does not depend on
the choice of representative of the cohomology class, by Stokes' theorem.

We define multiplication of the residue with  $\varphi\in\holo(E_0)$ as the mapping
\begin{equation*}
\varphi\Res^{\ell}_{\J}(\psi):=
\Res^{\ell}_{\J}(\varphi\psi)=
\int \omega_{\ell}\wedge\dbar(\varphi\psi)=
\int \varphi\omega_{\ell}\wedge\dbar\psi.
\end{equation*}
We say that $\Res_{\J}^{\ell}=0$ at $x$ if there exists a small neighborhood $D$ of $x$ such that $\Res_{\J}^{\ell}=0$ in $\mathcal{D}_\Z$.

\begin{example}\label{ex}\rm{
If $\J_x$ is a complete intersection, then the Koszul complex is a
resolution of $\holo(E_0)/\J$. This means that $r=1$ and that $\omega_1$ coincides with the
cohomological residue defined by Passare in \cite{Passare}. In the special case
of a complete intersection of codimension $n$ the residue $\Res^{1}_{\J}$ 
coincides with the local
Grothendieck residue, cf., Section \ref{intro}. The proof of this fact can be
seen in Example 2.6 in \cite{L}.
}
\end{example}

In view of Example \ref{ex} the following theorem is a direct generalization of
the local duality principle for complete intersection ideals and the local duality
principle for the cohomological residue of Passare.

\begin{theorem}\label{Thm}
Let $\varphi\in\holo(E_0)$ and let $[\xi_{\ell}]$, $\ell=1,\ldots,r$, generate $\mathcal{H}^p(E^*_{\bullet})$. Then the following are equivalent:
\smallskip\begin{enumerate}[(i)]
\item\label{ett} \smallskip$\varphi\in\J_x$
\item\label{tva} \smallskip$\varphi\omega_{\ell} = 0$ for all $\ell=1,\ldots,r$
\item\label{tre} \smallskip$\varphi\Res_{\J}^{\ell}=0$ at $x$ for all $\ell=1,\ldots,r$.
\end{enumerate}
\end{theorem}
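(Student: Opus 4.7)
The plan is to prove the chain $(i)\Rightarrow(ii)\Rightarrow(iii)\Rightarrow(i)$, with the final implication being the substantive one. For $(i)\Rightarrow(ii)$, after shrinking $X$ I can write $\varphi=f_1\psi_1$ with $\psi_1\in\holo(E_1)$; since $\psi_1$ is holomorphic one has $\nabla_E\psi_1=f\psi_1-\dbar\psi_1=f_1\psi_1=\varphi$, so Lemma~\ref{BevisLemma} applied with $\phi=\psi_1$ and $\phi'=u\varphi$ yields $[\xi_\ell u_p\varphi]_{\dbar}=[\xi_\ell(\psi_1)_p]_{\dbar}$, and the right-hand side vanishes because $\psi_1$ has no component in $E_p$ for $p\geq 2$; combined with \eqref{Tek} this gives $\varphi\omega_\ell=0$.

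For $(ii)\Rightarrow(iii)$, if $\varphi\omega_\ell=0$ in $\F_x$, pick a representative with $\xi_\ell u_p\varphi=\dbar v$ for $v$ smooth on a punctured neighborhood of $x$. For $\psi\in\mathcal{D}_\Z$ supported in a sufficiently small $D$ the product $v\wedge\dbar\psi$ extends smoothly across $\Z$ (because $\dbar\psi$ vanishes near $\Z$) and has compact support in $D$, so Stokes' theorem gives
\[\int\varphi\omega_\ell\wedge\dbar\psi=\int\dbar v\wedge\dbar\psi=\pm\int\dbar(v\wedge\dbar\psi)=0.\]

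For $(iii)\Rightarrow(i)$, I plan to argue through the intermediate cohomological statement $(ii)$. First I would upgrade the integral vanishing to $\varphi\omega_\ell=0$ in $H^{0,p-1}_{\dbar}(X\setminus\Z)$ via a Serre-type duality on the Stein complement: a $\dbar$-closed compactly supported $(n,n-p+1)$-form on $D\setminus\Z$ extends by zero across $\Z$, and the vanishing of compactly supported Dolbeault cohomology of the small Stein $D$ in this bidegree supplies a compactly supported primitive $\psi\in\mathcal{D}_\Z$, so the pairings $\int\varphi\omega_\ell\wedge\dbar\psi$ exhaust the Serre pairing of $[\varphi\omega_\ell]$ and all of them vanishing forces $[\varphi\omega_\ell]=0$. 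With $(ii)$ in hand, I would then translate the cohomological identity $[\xi_\ell u_p\varphi]_{\dbar}=0$ into the algebraic statement $\varphi\xi_\ell\in\operatorname{Im}(f_p^*)$ for every $\ell$, so that $\varphi$ annihilates every generator of $\mathcal{H}^p(E^*_\bullet)\cong\E xt^p_{\holo_x}(\holo_x/\J_x,\holo_x)$; Roos's theorem recalled in the introduction then concludes $\varphi\in\J_x$.

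The main obstacle is precisely this last algebraic conversion: given a smooth $\dbar$-primitive $v_\ell$ of $\xi_\ell u_p\varphi$ on $X\setminus\Z$, produce a holomorphic $\eta_\ell\in\holo(E_{p-1}^*)$ with $\varphi\xi_\ell=f_p^*\eta_\ell$. The natural tool is the identity $\nabla_{\End E}u=\operatorname{id}$ iterated along the resolution, trading $\dbar$-contributions in $v_\ell$ for applications of $f^*$ along the dual complex until a holomorphic remainder sits in $\operatorname{Im}(f_p^*)$; this is where the explicit sum $u=\sigma+\sigma(\dbar\sigma)+\cdots$ enters in an essential way, and where the purely algebraic and analytic ingredients of the paper have to be combined without invoking desingularization or Coleff--Herrera machinery.
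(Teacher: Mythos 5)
Your implications (i)$\Rightarrow$(ii) and (ii)$\Rightarrow$(iii) are fine and essentially coincide with the paper's (the first is verbatim the paper's argument via Lemma~\ref{BevisLemma} and \eqref{Tek}; the second is what the paper dismisses as trivial). The genuine gap is in (iii)$\Rightarrow$(i). The step you yourself isolate as the ``main obstacle'' --- passing from $[\xi_{\ell}u_p\varphi]_{\dbar}=0$ to $\varphi\xi_{\ell}\in\operatorname{Im}f_p^*$ so that Roos's theorem applies --- is, by \eqref{Tek}, exactly the implication $[\eta u_p1]_{\dbar}=0\Rightarrow[\eta]=0$ in $\mathcal{H}^p(E_{\bullet}^*)$, i.e.\ the non-degeneracy of the pairing \eqref{nybilin} in the \emph{first} factor. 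The paper states explicitly, right after Theorem~\ref{Thm}, that this non-degeneracy is only known via Theorem~1.2 of \cite{Andersson}, which rests on residue currents and hence Hironaka's theorem, and that an elementary proof of it would itself require a version of Theorem~\ref{Thm} for $E_0$ of higher rank. So your route either imports precisely the machinery the paper is written to avoid or is circular. The sketched ``staircase'' (trading $\dbar$-contributions for applications of $f^*$ along the dual complex) does not close this gap: to descend you must solve $\dbar$-equations on $X\setminus\Z$ starting in bidegree $(0,p-1)$, and $H^{0,p-1}_{\dbar}(X\setminus\Z)$ is exactly the nontrivial group in which $\omega_{\ell}$ lives; circumventing that obstruction is what the current extension of $u\varphi$ across $\Z$ (the Coleff--Herrera input) is for.

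The paper's actual proof of (iii)$\Rightarrow$(i) takes an entirely different route, which you should compare with yours: it localizes at the points $y\in\Z$ where $\J_y$ is Cohen--Macaulay (their complement $\mathcal{W}$ in $\Z$ has higher codimension), uses Proposition~\ref{well-defined} to replace the resolution near such $y$ by a minimal one of length $p$ without disturbing hypothesis (iii), invokes the already-established Cohen--Macaulay duality theorem (Theorem~2.4 of \cite{L}, cf.\ Remark~\ref{gammal}) to conclude $\varphi\in\J_y$ there, and finally passes from ``$\varphi\in\J_y$ off $\mathcal{W}$'' to $\varphi\in\J_x$ by the purely algebraic Lemma~\ref{lemma}, which is where the pure-dimensionality of $\J_x$ enters through its primary decomposition. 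Nothing in your proposal substitutes for this local-to-global step. (Your intermediate upgrade of the integral condition to the cohomological vanishing (ii) via Serre duality also quietly assumes separatedness of $H^{0,p-1}_{\dbar}(D\setminus\Z)$; the paper's argument never needs that upgrade, since Theorem~2.4 of \cite{L} is applied directly to the integral condition.)
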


In light of \eqref{u} we get that 
$u_p=\sigma(\dbar\sigma)^{p-1}$, thus precisely as in the complete intersection case we
get an explicit integral condition that $\varphi$ must fulfill in order to be in
$\J_x$.
\begin{corollary}
Let $\varphi\in\holo(E_0)$, $D$ be a sufficiently small neighborhood of $x$, and let $[\xi_{\ell}]$, $\ell=1,\ldots,r$, generate $\mathcal{H}^p(E^*_{\bullet})$.
Then $\varphi\in\J_x$ if and only if
\begin{equation*}
\int \xi_{\ell} \sigma(\dbar\sigma)^{p-1}\varphi\wedge \dbar\psi = 0
\end{equation*}
for all $\ell=1,\ldots,r$, and all $\psi\in\mathcal{D}_{\Z}$.
\end{corollary}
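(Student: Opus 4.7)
The plan is to deduce this corollary from Theorem \ref{Thm} by making the abstract condition (iii) completely explicit in terms of $\sigma$ and $\dbar$. First I would identify the component of $u$ that lands in $E_p$ when applied to a section of $E_0$. Since $\sigma$ is the minimal inverse to $f=\sum f_j$, it raises the $E$-grading by one, while $\dbar$ raises the form-degree by one. Starting from $\varphi\in\holo(E_0)$, the $k$-th summand $\sigma(\dbar\sigma)^{k-1}\varphi$ of \eqref{u} therefore lies in $\mathcal{E}^{0,k-1}(E_k)$, and its $E_p$-component is $u_p\varphi=\sigma(\dbar\sigma)^{p-1}\varphi$, as was already noted right above the corollary.

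Next I would invoke the equivalence (\ref{ett})~$\Leftrightarrow$~(\ref{tre}) of Theorem \ref{Thm}. By the definition of multiplication of the residue and the identity \eqref{Tek}, namely $\varphi\omega_\ell=[\xi_\ell u_p\varphi]_{\dbar}$, we have for every $\psi\in\mathcal{D}_\Z$
\begin{equation*}
\varphi\Res_\J^\ell(\psi)=\int \varphi\omega_\ell\wedge\dbar\psi = \int \xi_\ell u_p\varphi\wedge\dbar\psi = \int \xi_\ell\,\sigma(\dbar\sigma)^{p-1}\varphi\wedge\dbar\psi,
\end{equation*}
where in the first two equalities the integral depends only on the Dolbeault class by Stokes' theorem (since $\dbar\psi$ is supported away from $\Z$). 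Thus the statement "$\varphi\Res_\J^\ell=0$ at $x$ for all $\ell$" is literally the integral condition in the corollary, provided $D$ is chosen small enough that the preceding results apply.

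There is essentially no real obstacle here: the corollary is pure bookkeeping once Theorem \ref{Thm} is at hand and the explicit shape of $u_p$ has been read off from \eqref{u}. The only point worth noting is matching the "sufficiently small neighborhood $D$" in the hypothesis with the implicit neighborhood in the phrase "$\Res_\J^\ell=0$ at $x$"; since the latter is defined exactly as vanishing on $\mathcal{D}_\Z$ for some small enough $D$, the two formulations coincide and the corollary follows.
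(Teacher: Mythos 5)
Your proposal is correct and is essentially the paper's own argument: the paper derives the corollary from Theorem \ref{Thm} by observing that $u_p=\sigma(\dbar\sigma)^{p-1}$ on sections of $E_0$, so that condition (\ref{tre}) of the theorem, unwound via \eqref{Tek} and the definition of $\varphi\Res_{\J}^{\ell}$, becomes exactly the stated integral condition. Nothing further is needed.
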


\begin{remark}\label{gammal}
{\rm
If the ideal $\J_x$ is Cohen-Macaulay, then Theorem~\ref{Thm} coincides with Theorem~2.4 in \cite{L}.
In that case we may choose a free resolution of $\holo(E_0)\J$ that stops at level $p$, i.e., $E_{p+1}=0$. In particular, every $\xi\in\holo(E_p^*)$ satisfies $f_{p+1}^*\xi=0$. This means that we may chose $\xi_l$ as the vector with all entries equal to zero except entry $\ell$ that is equal to one, and that $u_p$ is a vector-valued $\dbar$-closed form. The cohomology classes $\varphi\omega_l$ are then represented by the varius entries in the vector $u_p\varphi$.
We use the Cohen-Macaulay case of the theorem in the proof of Theorem \ref{Thm}.
}
\end{remark}

\begin{lemma}\label{lemma}
Assume that 
$x\in\C^n$ and 
that $\mathcal{I}_x$ and $\J_x$ are ideal in $\holo_x$ such that
$\J_x\subset\mathcal{I}_x$. Moreover, assume that $\J_x$ has pure dimension and that there exists a small neighborhood $X$ of $x$ such that $\mathcal{I}_y=\J_y$ for all $y\in X\setminus \mathcal{W}$,
where $\mathcal{W}$ is a set of higher
codimension than $\Z$, the zero locus of $\J_x$. Then $\mathcal{I}_x=\J_x$. 
\end{lemma}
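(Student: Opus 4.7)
The plan is to argue by contradiction using the associated primes of the quotient module $M := \mathcal{I}_x/\J_x$. First I would observe that the natural short exact sequence
\[
0\to \mathcal{I}_x/\J_x \to \holo_x/\J_x \to \holo_x/\mathcal{I}_x \to 0
\]
exhibits $M$ as an $\holo_x$-submodule of $\holo_x/\J_x$. The hypothesis that $\mathcal{I}_y = \J_y$ for all $y\in X\setminus\mathcal{W}$ means that the coherent sheaf $\mathcal{I}/\J$ has support contained in $\mathcal{W}$, and hence, passing to the germ at $x$, we have $\operatorname{supp}(M)\subset \mathcal{W}$ (as an analytic germ), a set of strictly higher codimension than $\Z$.

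Next I would use the standard fact that for any $\holo_x$-submodule $M$ of $\holo_x/\J_x$ one has the inclusion $\operatorname{Ass}(M)\subset \operatorname{Ass}(\holo_x/\J_x)$. Because $\J_x$ is assumed to be of pure dimension, every associated prime of $\holo_x/\J_x$ has dimension equal to $\dim\Z$, and therefore so does every associated prime of $M$.

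Assume toward a contradiction that $M\neq 0$. Since $\holo_x$ is Noetherian, $\operatorname{Ass}(M)$ is nonempty, so there is a prime $\mathfrak{p}\in\operatorname{Ass}(M)$ with $\dim \holo_x/\mathfrak{p}=\dim\Z$. For any $\mathfrak{p}\in\operatorname{Ass}(M)$ one has $\mathfrak{p}\in\operatorname{supp}(M)$, and hence $V(\mathfrak{p})\subset\operatorname{supp}(M)\subset\mathcal{W}$. This forces $\dim V(\mathfrak{p})\le\dim\mathcal{W}<\dim\Z$, contradicting $\dim V(\mathfrak{p})=\dim\Z$. Therefore $M=0$, i.e., $\mathcal{I}_x=\J_x$.

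I do not expect a serious obstacle here; the only subtlety is the translation between the algebraic support of $M$ in $\operatorname{Spec}\holo_x$ and the analytic support of the coherent sheaf $\mathcal{I}/\J$ near $x$. This is routine because the irreducible components of the analytic germ $\operatorname{supp}(M)$ correspond bijectively to the minimal primes of $\operatorname{ann}(M)$, and the purity of $\J_x$ rules out any embedded components that could confuse the dimension count.
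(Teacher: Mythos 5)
Your proof is correct, and it reaches the conclusion by a genuinely cleaner route than the paper's. The paper works with an irredundant primary decomposition $Q_1\cap\ldots\cap Q_d$ of $\J_x$: it picks a component $Q_k$ with $\varphi\in\mathcal{I}_x\setminus Q_k$, chooses an auxiliary holomorphic $\psi_k$ vanishing on $\mathcal{W}\cap\Z_k$ but not identically on $\Z_k$ (this is where purity enters), applies the Nullstellensatz to get $\psi_k^M\varphi\in Q_k$, and then uses the primary property of $Q_k$ to force $\psi_k$ into $\sqrt{Q_k}$, a contradiction. You instead package the same dimension-theoretic tension abstractly: the key fact $\operatorname{Ass}(M)\subset\operatorname{Ass}(\holo_x/\J_x)$ for the submodule $M=\mathcal{I}_x/\J_x$ of $\holo_x/\J_x$, combined with purity, says every associated prime of $M$ has a zero set of dimension $\dim\Z$, while $V(\mathfrak{p})\subset\operatorname{supp}(M)\subset\mathcal{W}$ forces it to be smaller. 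Both arguments ultimately exploit that a nonzero submodule of $\holo_x/\J_x$ cannot be supported on a set of codimension larger than $\operatorname{codim}\Z$ when $\J_x$ is pure; your version avoids the explicit auxiliary function and the Nullstellensatz at the cost of invoking the (standard) theory of associated primes, and it handles the identification of the algebraic support of $M$ with the analytic support of $\mathcal{I}/\J$ with at least the same care as the paper, which glosses over the analogous point when asserting that $\mathcal{I}_x/Q_k$ is supported on $\mathcal{W}\cap\Z_k$.
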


\begin{proof}
Let $Q_1\cap\ldots\cap Q_d$ be an irredundant primary decomposition of $\J_x$ and let $\Z_j$ be the zero locus of $Q_j$.
Since $\J_x$ has pure dimension there exist holomorphic sections $\psi_j$ such that
$\psi_j$ is not identically zero on $\Z_j$ but $\psi_j=0$ on $\mathcal{W}_j=\mathcal{W}\cap \Z_j$.
Assume for a contradiction that $\varphi\in\mathcal{I}_x\setminus\J_x$.
Then $\varphi\in\mathcal{I}_x\setminus Q_k$ for some $k$.
The $\holo_x$-module $\mathcal{I}_x/Q_k$ has support on $\mathcal{W}_k$ and since $\psi_k=0$ on $\mathcal{W}_k$ it follows from the Nullstellensatz that $\psi_k^M(\mathcal{I}_x/Q_k)=0$ for some positive integer $M$.
That is, $\psi_k^M\varphi\in Q_k$. Since $Q_k$ is primary and $\varphi\notin Q_k$ it follows that $\psi_k^M\in Q_k$ which
contradicts that $\psi$ is not identically zero on $\Z_j$.

\end{proof}

\begin{proof}[Proof of Theorem \ref{Thm}]
$\eqref{ett}\Rightarrow\eqref{tva}$: Assume that $\varphi\in\J_x$. Then, if $X$ is chosen small enough, there exists a
holomorphic section $\phi$ of $E_1$ such that $\varphi=f_1\phi$. In particular,
$\nabla_E\phi=\varphi$ and $\phi_p=0$, so with $\xi_{\ell}$ such that $f^*_{p+1}\xi_{\ell}=0$ we get
\[ 
\varphi\omega_{\ell}=[\xi_{\ell} u_p \varphi]_{\dbar}=[\xi_{\ell}
\phi_p]_{\dbar}=0,
\]
 where we have used \eqref{Tek} in the first equality and Lemma \ref{BevisLemma}
in the second one.
\\
$\eqref{ett}\Rightarrow\eqref{tva}$: Trivial.
\\
$\eqref{tre}\Rightarrow\eqref{ett}$:
Assume that $\varphi\Res_{\J}^{\ell}=0$ at $x$ for all $\ell$. Let $D$ be the neighborhood of $x$ so that $\varphi\Res_{\J}^{\ell}=0$ in $\mathcal{D}_\Z$.
Let $\mathcal{W}$ be the set of points $y$ in $\Z\cap D$ such that $\J_y\subset\holo_y$ is not Cohen-Macaulay. Then
$\mathcal{W}$ is a set of higher codimension than $\Z$ since $\J_y$ is Cohen-Macaulay for every smooth point $y\in\Z$.
Pick a point $y\in(\Z\setminus \mathcal{W})\cap D$ and let $\mathcal{U}\subset X$ be a
small neighborhood of $y$ in $D$ that does not intersect $\mathcal{W}$. If we restrict
the resolution $\holo(E_{\bullet})$ to $\mathcal{U}$, the minimal resolution
$\holo(E_{\bullet}^{\prime})$ of $\holo(E_0)/\J$ have length
$p=\operatorname{codim}\J_y=\operatorname{codim}\J_x$. 
By Proposition \ref{well-defined} we may change both the resolution
$\holo(E_{\bullet})$ and metric on $E_{\bullet}$ and still get
assumption \eqref{tre} for the form $u_p$ associated to those choices. Therefore we
may assume that $\holo(E_{\bullet})$ is on the form
$\holo(E_{\bullet}^{\prime}\oplus E_{\bullet}^{\prime\prime})$ where
$E_{\bullet}^{\prime}$ is minimal and $E_{\bullet}^{\prime\prime}$ is trivial.
We may also assume that the metric on $E_{\bullet}$
respects the direct product, and since $\holo(E_0)$ has rank $1$ we can write
$u_p\varphi=(u_p^{\prime}\oplus 0)\varphi$, where $u_p^{\prime}$ is associated to the minimal
resolution.

Even though the cohomology classes and hence the residues $\Res_{\J}^{\ell}$ change if we change generators $[\xi_{\ell}]$ of $\mathcal{H}^p(E^*_{\bullet})$, it follows from the definition of the cohomology classes $\omega_{\ell}$ that condition \eqref{tre} in Theorem~\ref{Thm} does not change.
Thus, in light of Remark ~\ref{gammal} we can change 
generators $[\xi_{\ell}]$ for
$\mathcal{H}^p(E_{\bullet}^*)$ and use Theorem~2.4 
in \cite{L} to see that
$\varphi\in\J_y$. If we do the procedure above for all points $y$ in
$\Z\setminus \mathcal{W}$ we get that $\varphi\in\J_x$ everywhere except at a set $\mathcal{W}$
of higher codimension. Lemma~\ref{lemma} now concludes the proof with $\mathcal{I}_x= \{
\varphi; \varphi\in\J_x \text{ on } X\setminus \mathcal{W} \}$.
\end{proof}

The bilinear map \eqref{map} induces the bilinear map 
\begin{equation}\label{nybilin}
\mathcal{H}^p(E_{\bullet}^*)\times\holo(E_0)/\J\to\F.
\end{equation}
Theorem~\ref{Thm} implies that \eqref{nybilin} is non-degenerate in the second factor.
The fact that \eqref{nybilin} is non-degenerate in the first factor follows from Theorem~1.2 in \cite{Andersson} that is based on residue currents (and hence Hironaka's theorem).
It may be possible to prove that it is non-degenerate in the first factor in
a similar, more elementary fashion, as in this paper but in order to do that one 
needs to prove a 
similar theorem as
Theorem~\ref{Thm} but for $E_0$ with higher rank.

The map
\begin{equation*}
\holo(E_0)/\J\to\mathcal{H}om(\mathcal{H}^p(E_{\bullet}^*),\F),
\end{equation*}
where $\varphi$ is mapped on the homomorphism defined by $[\xi]\mapsto[\xi
u_p\varphi]_{\dbar}$ for $[\xi]\in\mathcal{H}^p(E_{\bullet}^*)$  is closely
related to a theorem of Roos, \cite{Roos}, see \cite{Andersson} for details.

\section{Some examples}

\begin{example}
{\rm
If there is a resolution of $\holo/\J$ on the form 
\[0\rightarrow\holo^m\overset{g}{\rightarrow}\holo^r\overset{f}{\rightarrow}\holo\]
with the trivial metric, then
we get
\[\sigma_1=f^*(ff^*)^{-1} \quad\text{and}\quad \sigma_2=(g^*g)^{-1}g^*.\]
}
\end{example}
When the resolution has lenght greater or equal to $2$, the computations of the minimal inverses are much more involved, see \cite{W}. In that case one can sometimes use that it is known what $u$ is in the case of a complete intersection, cf., Section~\ref{intro}, and Theorem~1.1 in \cite{LW} to calculate $\omega_{\ell}$.

\begin{example}
{\rm
In two variables, let $\J_0=(x^2, xy, y^2)$. Then $\J_0$ is Cohen-Macaulay and we can chose $\xi_1$ and $\xi_2$ as the vectors $(1,0)$ and $(0,1)$. Using Theorem 1.1 in \cite{LW} we get
\[\omega_1 = \left[\frac{\bar{x}^2d\bar{y}-\bar{y}d\bar{x}^2}{(|x|^4+|y|^2)^2}\right]_{\dbar} \quad\text{and}\quad
\omega_2 = \left[\frac{\bar{y}^2d\bar{x}-\bar{x}d\bar{y}^2}{(|y|^4+|x|^2)^2}\right]_{\dbar}.\]
}
\end{example}
\begin{example}
{\rm
In three variables, let $\J_0$ be the Cohen-Macaulay ideal $(x^2,y^2,z^2,yz)$. Then
\[\omega_1 = \left[\frac{\bar{x}^2d\bar{y}\wedge d\bar{z}^2-\bar{y}d\bar{x}^2\wedge d\bar{z}^2 + \bar{z}^2d\bar{x}^2\wedge d\bar{y}}{(|x|^4+|y|^2 + |z|^4)^3}\right]_{\dbar}
\]
and
\[
\omega_2 = \left[\frac{\bar{x}^2d\bar{y}^2\wedge d\bar{z}-\bar{y}^2d\bar{x}^2\wedge d\bar{z} + \bar{z}d\bar{x}^2\wedge d\bar{y}^2}{(|x|^4+|y|^4 + |z|^2)^3}\right]_{\dbar}.
\]
}
\end{example}

\end{document}